\newtheorem{theorem}{Theorem}
\newtheorem{lemma}[theorem]{Lemma}
\newtheorem{proposition}[theorem]{Proposition}
\newtheorem{rem}{Remark}
\title{Distributed Gaussian Learning over Time-varying Directed Graphs}
\author{Angelia Nedi\'{c}, Alex Olshevsky and C\'{e}sar A.\ Uribe
	\thanks{A. Nedi\'{c} (\textit{angelia.nedich@asu.edu}) is with the ECEE Department, Arizona State University. A. Olshevsky (\textit{alexols@bu.edu}) is with the ECE Department, Boston University. C.A. Uribe (\textit{cauribe2@illinois.edu}) is with the Coordinated Science Laboratory, University of Illinois. 
		This research is supported partially by the National Science Foundation under
		grants no.\ CNS 15-44953 and no.\ CMMI-1463262, and by the Office of Naval Research under grant
		no.\ N00014-12-1-0998.}  
}
\begin{document}
	\maketitle
	
	\begin{abstract}
		We present a distributed (non-Bayesian) learning algorithm for the problem of parameter estimation with Gaussian noise. The algorithm is expressed as explicit updates on the parameters of the Gaussian beliefs (i.e. means and precision). We show a convergence rate of $O(1/k)$ with the constant term depending on the number of agents and the topology of the network. Moreover, we show almost sure convergence to the optimal solution 
		of the estimation problem for the general case of time-varying directed graphs.
	\end{abstract}
	
	\section{Introduction}
	
	The analysis of distributed (non-Bayesian) learning algorithm gained popularity since the seminal work of 
	Jadbabaie et al.~\cite{jad12}. The ability of non-Bayesian updates to combine distributed optimization and learning algorithms make them especially useful for the design of distributed estimation algorithms with provable performance. 
	
	In the distributed learning setup, a group of agents repeatedly receive signals about a certain unknown state of the world or parameter. No single agent has enough information to accurately estimate the unknown state and, 
	thus, interaction with other agents is needed. 
	Several results are readily available for performance evaluation of distributed learning algorithms for a variety of scenarios. 
	Asymptotic exponential convergence rates where developed in~\cite{sha13,lal14b,sha16b}, 
	non-asymptotic bounds in~\cite{ned14}, time-varying directed graphs in~\cite{ned16}, conflicting hypotheses and linear rates in~\cite{ned15}, no-recall approaches to belief sharing in~\cite{rah15} and adversarial cases in~\cite{su16c,su16b}. 
	This list is necessarily incomplete, and the reader is referred to~\cite{ned16c} 
	for an extended set of references.
	
	Most of the previously proposed models assume that the parameter space of the estimation process is finite. Initial approaches to the study of continuum sets of hypotheses were developed in~\cite{nedic2016b}, where explicit non-asymptotic rates were derived. 
	A similar setup with Gaussian noisy observations with nonlinear function of the parameter to be estimated has been 
	considered in~\cite{sah16a,sah16b}, where almost sure convergence and asymptotic exponential rates for fixed undirected graphs were established.
	Allowing the hypotheses set to be infinite (e.g.\ a compact subset of $\mathbb{R}^m$)
	enables the exploration of traditional estimation problems in a distributed manner.
	One of such problems is the parameter estimation with Gaussian noise, which is the main concern of this manuscript. 
	
	In particular, we focus on the \textit{Gaussian case} of 
	the distributed (non-Bayesian) learning setup in~\cite{nedic2016b}. We analyze the belief update algorithm where agents observe a parameter corrupted by Gaussian noise and likelihood models are Gaussian functions, 
	which results in \textit{Gaussian beliefs}. 
	We present explicit updates for the beliefs' mean and variances, thus providing an algorithm
	for the distributed estimation process. We show almost sure convergence to an optimal parameter 
	and establish a convergence rate of $O(1/k )$. 
	We also provide simulation results for our algorithm and compare it with
	two approaches proposed in~\cite{chu16,bia16}.
	Our results hold for the general case of time-varying directed graphs, which are established 
	by using ideas from the push-sum algorithm in~\cite{ned16,ned13}.   
	
	This paper is organized as follows. Section~\ref{sec_distributed} describes the problem setup, as well as the proposed algorithm and main results. Section \ref{inde} provides a detailed comparison with results from \cite{chu16,bia16} for the case of identically distributed observations for all agents. Section~\ref{sec_simulations} shows simulation results and comparison with other algorithms. Finally, conclusions and future work are presented in Section~\ref{sec_conclusions}.
	
	\textbf{Notation}: superscripts refer to agents which are usually indexed by the letters $i$ or $j$.
	Subscripts indicate instants of time which are denoted by the letter $k$. Random variables are denoted by capital letters, e.g.\  $S_k^i$, and their corresponding realizations 
	by lower case letters, e.g.\ $s_k^i$.  
	The transpose of a vector $x$ is denoted as $x'$. 
	The term $\left[A\right]_{ij}$ denotes the entry of a matrix $A$ at the $i$-th row and the $j$-th column. 
	For a sequence $\{A_{k}\}$ of matrices we let $A_{k:t} = A_kA_{k-1}\cdots A_{t+1}A_t$ for $k \geq t$. 
	We denote the Gaussian function by
	\begin{align*}
	\mathcal{N}(\theta,\sigma^2) & 
	= \frac{1}{\sqrt{2\pi} \sigma^2} \exp \left(-\frac{(x -\theta)^2}{2\sigma^2}\right).
	\end{align*}
	
	\vspace{-0.8em}
	\section{Problem Setup, Algorithm and Results}\label{sec_distributed}
	
	Consider a group of $n$ agents whose goal is to collectively solve the following optimization problem
	\vspace{-0.5em}
	\begin{align}\label{opt_problem}
	\min_{\theta \in \Theta} F(\theta) & \triangleq
	\sum\limits_{i=1}^n D_{KL}\left(f^i\|\ell^i\left(\cdot|\theta\right)\right),
	\end{align}
	where $D_{KL}\left(f^i\|\ell^i\left(\cdot|\theta\right)\right)$ is 
	the Kullback-Leibler divergence between an \textit{unknown} distribution $f^i$ and a parametrized distribution $\ell^i( \cdot | \theta)$. Each agent $i$ has access to realizations of a random variable $S_k^i \sim f^i$ and a local family of 
	parametrized distributions $\{\ell^i(\cdot|\theta)\mid \theta \in \Theta\}$, where $\Theta$ is a set of parameters. 
	In other words, the agents want to determine
	a parameter $\theta^* \in \Theta$ corresponding to a distribution $\prod_{i=1}^{n} \ell^i(\cdot|\theta^*)$ 
	that is the closest to the distribution $\prod_{i=1}^{n}f^i$ in the sense of Kullback-Leibler divergence. 
	Moreover, the agents are allowed to interact over a sequence of time-varying directed graphs 
	$\{\mathcal{G}_k\}$, where $ \mathcal{G}_k=(\{1,\ldots,n\},E_k)$ and $E_k$ is a set of edges such that $(j,i) \in E_k$ indicates that agent $j$ can communicate with agent $i$ at time $k$.
	
	In~\cite{nedic2016b}, 
	the authors proposed an algorithm for solving the general problem of Eq.~\eqref{opt_problem} for compact sets $\Theta \subset \mathbb{R}^d$. 
	The algorithm generates non-Bayesian posteriors beliefs based on local observations 
	and shared beliefs from neighboring agents.
	Each agent $i$ constructs a sequence $\{\mu_k^i\}_{k=1}^{\infty}$ of beliefs about the hypothesis set $\Theta$,
	where $\mu_k^i$ maps measurable subsets of $\Theta$ to real values indicating 
	the belief that the unknown parameter $\theta^*$ is in the given subset.
	The algorithm proposed in~\cite{nedic2016b} is given by
	\begin{align}\label{protocol_cont}
	\bar{\mu}_{k+1}^i & \propto \prod\limits_{j=1}^{n} \left( \bar{\mu}_{k}^j\right)^{a_{ij}}  \ell^i(s_{k+1}^i|\cdot),
	\end{align}
	where $\bar{\mu}_k^i = d\mu_k^i / d\bar{\lambda}$ is a belief density function, see~\cite{sme05}, with respect to a reference measure $\bar{\lambda}$. Effectively, for a measurable subset $D \subseteq \Theta$, we have that the belief that $\theta^*$ is in $D$ is given by
	$\mu_k^i(D) = \int_{\theta \in D} \bar{\mu}_k^i \bar{\lambda}(\theta)$. Additionally, the scalar
	$a_{ij}$ is nonnegative and indicates how much agent $i$ weights the beliefs coming from its neighbor $j$, 
	with an understanding that $a_{ij}=0$ if no interaction between them occurs.
	
	In this manuscript, we assume that the observations have
	\textit{Gaussian distribution} and that the likelihoods models are Gaussian,
	both with bounded second order moments, 
	i.e.\ $S_k^i \sim \mathcal{N}(\theta^i,(\sigma^i)^2)$ and 
	$\ell^i(\cdot|\theta,\sigma^i) = \mathcal{N}(\theta,(\sigma^i)^2)$ where $\sigma^i > 0$ for every $i$. 
	This setting corresponds to the case of having measurements of the true parameter $\theta^*$ corrupted by 
	some Gaussian noise and the agents being informed that the noise is Gaussian with a known variance.
	
	The Kullback-Leibler distance between two univariate Gaussian distributions $p$ and $q$, 
	where $p = \mathcal{N}(\theta^1,(\sigma^1)^2)$ and $q = \mathcal{N}(\theta^2,(\sigma^2)^2)$ is given by
	\vspace{-0.3em}
	\begin{align*}
	D_{KL}\left(p\|q\right) & = \log \frac{\sigma^2}{\sigma^1} + \frac{(\sigma^1)^2 + (\theta^1 - \theta^2)^2}{(\sigma^2)^2} -\frac{1}{2}.
	\end{align*}
	Thus, in this case, the problem in Eq. \eqref{opt_problem} is equivalent to
	\vspace{-0.3em}
	\begin{align}\label{opt_problem2}
	\min_{\theta \in \Theta} \hat{F}(\theta) & \triangleq
	\sum\limits_{i=1}^n \frac{(\theta - \theta^i)^2}{2 (\sigma^i)^2}, 
	\end{align}
	which is convex with a unique solution 
	\vspace{-0.3em}
	\begin{align}\label{opt_theta}
	\theta^* = \sum\limits_{i=1}^{n}\frac{\theta^i / (\sigma^i)^2}{\sum\limits_{j=1}^{n}1 /(\sigma^j)^2}.
	\end{align}
	
	However, the exact value of $\theta^i$ is unknown and each agent $i$ has access only to noisy observations of the form 
	${S_k^i = \theta^i + \epsilon^i}$, where $\epsilon^i \sim \mathcal{N}(0,(\sigma^i)^2)$. Moreover, variances are only known locally, i.e.\ agent $i$ only knows $\sigma^i$. 
	
	We propose the following distributed algorithm for solving the problem in Eq.~\eqref{opt_problem2} over time-varying directed graphs
	\begin{subequations}\label{ours}
		\begin{align}
		\tau_{k+1}^i & =  \sum\limits_{j=1}^{n}\left[A_k\right]_{ij}\tau_k^j + \tau^i \label{ours_a}\\
		\theta_{k+1}^i & = \frac{\sum\limits_{j=1}^{n} \left[A_k\right]_{ij}\tau_k^j \theta_k^j + s_{k+1}^i\tau^i }{\tau_{k+1}^i} \label{ours_b}
		\end{align}
	\end{subequations}
	where $\tau^i = 1 / (\sigma^i)^2$ is refereed as the precision of the observations. 
	The weights $\left[A_k\right]_{ij}$ are chosen as
	\begin{align} \label{matrix}
	\left[A_k\right]_{ij} & = \begin{cases}
	\frac{1}{d_k^j+1} & \text{if } (j,i) \in E_k,\\
	0 & \text{otherwise}
	\end{cases}
	\end{align}
	where $d_k^j$ is the out-degree of node $j$ at time $k$. 
	Without loss of generality, we assume that $\tau_0^i=\tau^i$ for all~$i$.
	
	\begin{rem}
		It is {\upshape not} necessary for each agent to have some form of informative observations. Indeed, there might be agents with no observations working as buffers for information for which we also expect correct estimates of $\theta^*$. These ``blind" agents depend on communicating with other agents to construct its estimates. 
	\end{rem}
	
	\begin{rem}
		While our focus is in on the univariate Gaussian case,
		extensions to the multivariate are similarly possible using the results of conjugate priors for multivariate Gaussian distributions.
	\end{rem}
	
	The next proposition shows that the algorithm in Eq.~\eqref{ours} is 
	a specific realization of Eq.~\eqref{protocol_cont} 
	for the case of Gaussian distributions in the priors and likelihood models.
	
	\begin{proposition}\label{prop1}
		Let the prior belief density $\bar{\mu}_0^i$ of every agent  be a Gaussian function, i.e. 
		\begin{align*}
		\bar{\mu}_0^i(\theta;\theta_0^i,\sigma^i) & =\mathcal{N}(\theta_0^i,(\sigma^i)^2)
		\end{align*}
		and let the parametric family of distributions for the likelihood models be Gaussian functions, i.e.
		\begin{align*}
		\ell^i(s|\theta;(\sigma^i)^2) & =\mathcal{N}(\theta,(\sigma^i)^2).
		\end{align*}
		Then, for any $k \geq 1$, the posterior belief density $\bar{\mu}_{k}^i$, given by Eq.~\eqref{protocol_cont}, 
		is also a Gaussian function. Moreover, if the weights $a_{ij}$ are chosen to be $1 / (d_k^j+1)$, then the mean and the standard deviation of the posterior follow Eq.~\eqref{ours}.
	\end{proposition}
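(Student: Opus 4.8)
The plan is to argue by induction on $k$, representing each Gaussian belief density by the pair (precision, mean) and exploiting that the logarithm of a Gaussian is a quadratic in $\theta$, so that a product of powers of Gaussians multiplied by a Gaussian likelihood again has a quadratic log-density. First I would note that since Eq.~\eqref{protocol_cont} fixes $\bar{\mu}_{k+1}^i$ only up to normalization, every factor independent of $\theta$ may be absorbed into the proportionality symbol, so it suffices to track the $\theta$-dependence of the exponent. With $\tau^i = 1/(\sigma^i)^2$, the prior $\bar{\mu}_0^i = \mathcal{N}(\theta_0^i,(\sigma^i)^2)$ is Gaussian with precision $\tau_0^i = \tau^i$, which matches the standing assumption $\tau_0^i=\tau^i$ and furnishes the base case.

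For the inductive step I would assume $\bar{\mu}_k^j \propto \exp(-\tfrac{1}{2}\tau_k^j(\theta-\theta_k^j)^2)$ for every $j$ and substitute into Eq.~\eqref{protocol_cont}, writing the likelihood, evaluated at the realized observation and regarded as a function of $\theta$, as $\ell^i(s_{k+1}^i|\theta) \propto \exp(-\tfrac{1}{2}\tau^i(\theta-s_{k+1}^i)^2)$. Taking logarithms gives
\[
\log \bar{\mu}_{k+1}^i = c - \frac{1}{2}\sum_{j=1}^{n} a_{ij}\,\tau_k^j\,(\theta-\theta_k^j)^2 - \frac{1}{2}\,\tau^i\,(\theta - s_{k+1}^i)^2,
\]
a quadratic in $\theta$ with strictly negative leading coefficient; hence $\bar{\mu}_{k+1}^i$ is again a (normalizable) Gaussian, which proves the first assertion. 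To identify the parameters I would complete the square: the coefficient of $\theta^2$ yields the precision update $\tau_{k+1}^i = \sum_{j} a_{ij}\tau_k^j + \tau^i$, and identifying $a_{ij}$ with $[A_k]_{ij} = 1/(d_k^j+1)$ recovers Eq.~\eqref{ours_a}; matching the coefficient of $\theta$ and dividing through by $\tau_{k+1}^i$ gives the precision-weighted mean update of Eq.~\eqref{ours_b}.

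The conceptual content is light: everything reduces to the closure of the Gaussian family under multiplication and exponentiation, so I expect the only delicate point to be the bookkeeping in completing the square --- correctly collecting the constant $c$ and the cross terms, and confirming that the extracted pair $(\theta_{k+1}^i,\tau_{k+1}^i)$ parametrizes a properly normalized Gaussian so that the induction hypothesis is reproduced at step $k+1$. This argument also quietly uses that $\tau_{k+1}^i>0$, which holds since $\tau^i>0$ and the weights $a_{ij}$ and precisions $\tau_k^j$ are nonnegative; note in particular that the mean update need not assume the weights sum to one, because the division by $\tau_{k+1}^i$ supplies the normalization.
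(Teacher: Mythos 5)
Your proof is correct. The paper itself states Proposition~\ref{prop1} without proof, and your argument --- induction on $k$, absorbing normalization into the proportionality in Eq.~\eqref{protocol_cont}, and completing the square to read off the precision update $\tau_{k+1}^i = \sum_j a_{ij}\tau_k^j + \tau^i$ and the precision-weighted mean --- is exactly the standard conjugate-prior computation the authors evidently intend; your observations that the base case uses $\tau_0^i=\tau^i$, that $\tau_{k+1}^i>0$ ensures normalizability, and that the row sums of the column-stochastic $A_k$ need not equal one are the right points to make explicit.
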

	
	Before presenting our main results, we state two auxiliary lemmas from~\cite{ned13} that 
	describe the geometric convergence for the product of column stochastic matrices.
	\begin{lemma}\label{lemma_angelia}
		[Corollary 2.a in \cite{ned13}] Let the graph sequence $\{\mathcal{G}_k\}$
		be B-strongly connected\footnote{There is an integer $B\ge 1$ such that the graph $\left(V,\bigcup_{i=kB}^{\left(k+1\right)B-1}E_i\right)$ is strongly connected for all $k \geq 0$}. Then, there is a sequence $\{\phi_k\}$ of stochastic vectors such that
		\begin{align*}
		|\left[A_{k:t}\right]_{ij} - \phi_k^i| & \leq C \lambda^{k-t}  \ \ \ \ \ \ \text{for all } \ k \geq t \geq 0
		\end{align*}
		where $\{A_k\}$ is as in Eq.~\eqref{matrix} and 
		the constants $C$ and $\lambda$ satisfy the following relations:\\
		\noindent (1)
		For general $B$-strongly connected
		graph sequences $\{\mathcal{G}_k\}$ 
		\begin{align*}
		C = 4, & \ \ \ \ \lambda = \left(1-\frac{1}{n^{nB}}\right)^{\frac{1}{B}}. \ 
		\end{align*}
		\noindent (2) If every graph $\mathcal{G}_k$ is regular with $B=1$
		\begin{align*}
		C = \sqrt{2}, & \ \ \ \ \lambda = 1-1/4n^3 .
		\end{align*}
	\end{lemma}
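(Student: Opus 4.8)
The plan is to prove this as a statement about the weak ergodicity of backward products of column-stochastic matrices, with the rate controlled by a coefficient of ergodicity that contracts once per connectivity window. First I would pass to transposes: since each $A_k$ is column-stochastic, $P_k := A_k'$ is row-stochastic and $(A_{k:t})' = P_t P_{t+1}\cdots P_k$. The assertion that every column of $A_{k:t}$ approaches a common vector $\phi_k$ is then equivalent to the statement that all rows of the row-stochastic product $P_t\cdots P_k$ approach a common row, i.e.\ the product reaches consensus. I would introduce the candidate limit through the forward recursion $\phi_k = A_k\phi_{k-1}$, observe that column-stochasticity ($\mathbf{1}'A_k = \mathbf{1}'$) keeps each $\phi_k$ a stochastic vector, and obtain existence of a $t$-independent limiting sequence by a Cauchy argument once the geometric contraction below is in hand (the columns of $A_{k:t}$ and $A_{k:t'}$ differ by a quantity that is geometrically small in $\min(t,t')$, so they converge to the same $\phi_k$).

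The rate itself I would extract from Dobrushin's coefficient of ergodicity $\delta(P)=1-\min_{i_1,i_2}\sum_s\min(P_{i_1 s},P_{i_2 s})$, which lies in $[0,1]$, is submultiplicative ($\delta(PQ)\le\delta(P)\delta(Q)$), and dominates the spread of the rows of a product. Thus it suffices to bound $\delta$ over long-enough blocks and multiply. The heart of the argument is to show that the product over a block of about $nB$ consecutive steps is entrywise positive: because the graphs are $B$-strongly connected and every node carries a persistent self-loop, within $n$ successive windows any ordered pair of nodes is joined by a time-respecting path (advancing along at least one edge of the path per window and idling via self-loops otherwise). Since every nonzero weight is at least $1/n$ by Eq.~\eqref{matrix} and such a path has at most $nB$ hops, every entry of the block product is at least $(1/n)^{nB}$. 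A row-stochastic matrix with all entries $\ge n^{-nB}$ is scrambling with $\delta\le 1-n^{-nB}$, so chaining blocks, extracting the appropriate root, and absorbing the leftover boundary factor from the block count into the multiplicative constant yields the entrywise bound $|[A_{k:t}]_{ij}-\phi_k^i|\le C\lambda^{k-t}$ with the stated $\lambda$ and $C=4$.

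For the regular case with $B=1$ I would instead exploit that $A_k$ is then \emph{doubly} stochastic, so $\phi_k=\tfrac1n\mathbf{1}$ is fixed and the convergence is symmetric. This lets me replace the Dobrushin argument by an $\ell_2$/spectral-gap estimate: the deviation $A_{k:t}-\tfrac1n\mathbf{1}\mathbf{1}'$ contracts by the second-largest singular value of each factor, and a standard bound on the spectral gap of a connected doubly-stochastic matrix with entries $\ge 1/n$ and positive diagonal gives a per-step factor $1-1/(4n^3)$; the passage from the $\ell_2$ bound to the entrywise bound accounts for the sharper constant $C=\sqrt2$.

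I expect the main obstacle to be the quantitative scrambling step, namely certifying the explicit exponents: one must verify that $O(nB)$ steps of a $B$-strongly connected sequence with self-loops suffice to make a fixed entry positive (padding short paths with self-loops so they become time-respecting) and that the accumulated weight along such a path is no smaller than $(1/n)^{nB}$, so that the coefficient of ergodicity drops by the claimed amount and the block-to-step conversion reproduces $\lambda=(1-1/n^{nB})^{1/B}$. The remaining pieces — submultiplicativity of $\delta$, the Cauchy/limit argument establishing $\phi_k$, and the norm conversions producing $C$ — are routine once this lower bound is in place.
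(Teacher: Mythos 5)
A preliminary remark: the paper itself contains no proof of this lemma --- it is imported verbatim as Corollary 2.a of \cite{ned13} --- so your attempt can only be measured against the standard argument, which is essentially what the cited source carries out: pass to the transposed, row-stochastic products, establish geometric decay of an ergodicity coefficient over connectivity blocks, and handle the regular case by a doubly-stochastic $\ell_2$ contraction. Your architecture is exactly this, so in outline you are on the right track. Two of your auxiliary steps can be tightened. First, no Cauchy argument is needed to produce $\phi_k$: since $A_{k:t}=A_{k:t'}A_{t'-1:t}$ for $t<t'$ and the right factor is column stochastic, the convex hulls of the columns of $A_{k:t}$ are nested as $t$ decreases, so you may simply take $\phi_k$ to be any column of $A_{k:0}$; the bound then follows from the column spread of $A_{k:t}$ alone (and your ``geometrically small in $\min(t,t')$'' should in any case be $\max(t,t')$ --- it is the \emph{shorter} product that controls the discrepancy). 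Second, your positivity argument (``advancing along at least one edge of the path per window'') is loose as stated, because the union graph, and hence the path, changes from window to window; the correct formulation is a reachable-set argument: the set of nodes reachable from a fixed node by time-respecting paths (idling on self-loops) grows by at least one vertex per $B$-window while it is proper, since each window's union graph has an edge leaving it. This is what certifies entrywise positivity after at most $nB$ steps, with entries $\geq n^{-nB}$.

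The genuine gap is quantitative and sits exactly where you predicted, but it is a real one: the scrambling bound you state, Dobrushin coefficient $\le 1-n^{-nB}$ per block of $nB$ steps, does \emph{not} yield the claimed rate. Chaining blocks and taking roots from that bound gives a per-step factor $(1-n^{-nB})^{1/(nB)}$, which is strictly \emph{larger} (worse) than the stated $\lambda=(1-n^{-nB})^{1/B}$; the entire content of the stated rate is the exponent $1/B$ rather than $1/(nB)$. To recover it you must keep the full Dobrushin overlap: when every entry of the block product is at least $n^{-nB}$, the overlap of any two rows is at least $n\cdot n^{-nB}$, so the coefficient is at most $1-n^{1-nB}$ per block, and then Bernoulli's inequality, $(1-nx)^{1/n}\le 1-x$ applied with $x=n^{-nB}$, gives $(1-n^{1-nB})^{1/(nB)}\le(1-n^{-nB})^{1/B}=\lambda$, with the incomplete-block remainder absorbed into $C=4$. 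Finally, in part (2), the per-step factor $1-1/(4n^3)$ is not a one-line singular-value estimate: it is itself a nontrivial known lemma (a per-step decrease of order $\eta/n^{2}$ on $\|x-\bar{x}\mathbf{1}\|_2^2$ for doubly stochastic matrices with positive diagonal and smallest positive entry $\eta\ge 1/n$), so a self-contained proof must either cite that result explicitly or reprove it rather than label it standard.
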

	
	\begin{lemma} \label{lemma_deltabound}
		[Corollary 2.b in \cite{ned13}] 
		Let the graph sequence $\left\{\mathcal{G}_k\right\}$ be B-strongly connected, and define
		\begin{align*}
		\delta \triangleq\inf_{k\geq 0} \left(\min_{1\leq i\leq n}\left[A_{k:0} \mathbf{1}_n \right]_i\right).
		\end{align*}
		Then, $\delta \geq 1/n^{nB}$. Moreover, if every $\mathcal{G}_k$ is regular and strongly connected (i.e. $B=1$), then 
		$\delta=1$.
		Furthermore, the sequence
		$\{\phi_k\}$ from Lemma \ref{lemma_angelia} satisfies $\phi_k^j \geq \delta/n$ 
		for all $k \geq 0$ and $j = 1, \ldots, n$.
	\end{lemma}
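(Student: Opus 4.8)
The plan is to establish the three assertions in turn: the lower bound $\delta \ge 1/n^{nB}$, the exact value $\delta = 1$ in the regular case, and finally $\phi_k^j \ge \delta/n$. The starting observations are that, by the choice \eqref{matrix}, each $A_k$ is column stochastic (column $j$ has exactly $d_k^j+1$ nonzero entries, all equal to $1/(d_k^j+1)$), that every nonzero entry is at least $1/n$ since $d_k^j \le n-1$, and that each node carries a self-loop so $[A_k]_{ii}\ge 1/n$ for all $i,k$. Writing $v_k = A_{k:0}\mathbf{1}_n$ for the vector of row sums, column stochasticity gives $\mathbf{1}_n' v_k = \mathbf{1}_n'\mathbf{1}_n = n$, so the total ``mass'' is conserved, and $v_k^i=[A_{k:0}\mathbf{1}_n]_i$ is exactly the quantity whose infimum defines $\delta$.

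The engine of the whole argument is an entrywise lower bound on products over a window of $nB$ consecutive matrices: for every $s\ge 0$ and all $i,j$, $[A_{(s+nB-1):s}]_{ij} \ge 1/n^{nB}$. I would prove this by a reachability argument. Fixing a source $j$, let $Q_t$ denote the set of nodes reachable from $j$ by a time-respecting walk using $A_s,\dots,A_t$; the self-loops give the monotone recursion $Q_{t+1} = Q_t \cup \{\text{out-neighbors of } Q_t \text{ at time } t+1\}$. By $B$-strong connectivity the union graph over each block of $B$ steps is strongly connected, so whenever $Q_t \ne V$ there is an edge leaving $Q_t$ within the block, adding at least one new node; hence $Q$ grows by at least one every $B$ steps until it equals $V$, which therefore happens within $(n-1)B < nB$ steps. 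Membership $i \in Q_{s+nB-1}$ means there is a walk of length exactly $nB$ from $j$ to $i$ (shorter walks are padded with self-loops), whose weights multiply to at least $(1/n)^{nB}$. With this in hand, $\delta \ge 1/n^{nB}$ follows in two cases. For $k \le nB-1$ the pure self-loop walk gives $v_k^i \ge [A_{k:0}]_{ii} \ge (1/n)^{k+1} \ge 1/n^{nB}$. For $k \ge nB$, factor $A_{k:0} = A_{k:k-nB+1}\,A_{(k-nB):0}$; the first factor has all entries $\ge 1/n^{nB}$, and summing against a column (which sums to one) of the column-stochastic second factor yields $[A_{k:0}]_{ij} \ge 1/n^{nB}$, so $v_k^i \ge 1/n^{nB}$ a fortiori.

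For the regular case with $B=1$, regularity makes each $A_k$ \emph{doubly} stochastic: when every node has common in- and out-degree $d$, each row $i$ has $d+1$ nonzero entries equal to $1/(d+1)$, so $A_k\mathbf{1}_n = \mathbf{1}_n$ in addition to $\mathbf{1}_n'A_k = \mathbf{1}_n'$. A product of row-stochastic matrices fixes $\mathbf{1}_n$, hence $v_k = A_{k:0}\mathbf{1}_n = \mathbf{1}_n$ for every $k$, giving $\min_i v_k^i = 1$ and therefore $\delta = 1$.

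Finally, for the bound on $\phi_k$ I would use Lemma~\ref{lemma_angelia}: letting $k-t\to\infty$ in $|[A_{k:t}]_{ij}-\phi_k^i|\le C\lambda^{k-t}$ shows that every column of $A_{k:t}$ converges to the common vector $\phi_k$, so averaging the $n$ columns gives $\phi_k^i = \lim \frac{1}{n}[A_{k:t}\mathbf{1}_n]_i$. Since each $A_{k:t}$ is column stochastic and, by exactly the window argument above applied from the starting index $t$, has every row sum at least $\delta$, the limit is at least $\delta/n$, which is the claim. I expect the main obstacle to be the window/reachability step underlying both $\delta \ge 1/n^{nB}$ and the $\phi_k$ estimate --- in particular making the monotone growth of $Q_t$ precise, and, for the $\phi_k$ bound, ensuring that the row-sum lower bound $\delta$ is genuinely available for products $A_{k:t}$ starting from \emph{every} index $t$ (not only $t=0$) before passing to the limit.
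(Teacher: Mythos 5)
Your first two steps are essentially correct, and they are in substance the argument behind the cited source (note the paper itself gives no proof of this lemma --- it imports it as Corollary 2.b of \cite{ned13}, so the comparison here is against that construction). The window bound $[A_{(s+nB-1):s}]_{ij}\ge 1/n^{nB}$ survives the block-alignment issue you flagged, since a window of $nB$ consecutive indices always contains at least $n-1$ complete aligned $B$-blocks, which is exactly the number of growth steps needed; the two-case split giving $\delta\ge 1/n^{nB}$ and the doubly-stochastic observation giving $\delta=1$ in the regular case are both fine.

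The genuine gap is in the last step, and it is not the one you flagged. For a \emph{fixed} $k$ there is no limit ``$k-t\to\infty$'' to take: Lemma~\ref{lemma_angelia} only covers $0\le t\le k$, so the strongest consequence available is the inequality at $t=0$, namely $\phi_k^i \ge \frac{1}{n}\left[A_{k:0}\mathbf{1}_n\right]_i - C\lambda^{k} \ge \delta/n - C\lambda^{k}$. That proves $\liminf_{k\to\infty}\phi_k^i \ge \delta/n$, but the lemma asserts the bound for \emph{all} $k\ge 0$, and for small $k$ the error term (with $C=4$ or $C=\sqrt{2}$) swamps $\delta/n$ entirely. In fact no argument treating $\phi_k$ as a black box satisfying Lemma~\ref{lemma_angelia} can close this: if $\{\phi_k\}$ satisfies that lemma, then replacing $\phi_0$ by the basis vector $e_1$ still satisfies it (at $k-t=0$ the bound $C\ge\sqrt{2}>1$ is vacuous since all quantities lie in $[0,1]$), yet the modified sequence violates $\phi_0^j\ge\delta/n$ for $j\ne 1$. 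So the claim is a property of a \emph{particular choice} of $\{\phi_k\}$, and a proof must exhibit it. The repair is short: set $\phi_k:=\frac{1}{n}A_{k:0}\mathbf{1}_n$. Then $\phi_k^i\ge\delta/n$ for every $k$ by the very definition of $\delta$; and the decay property holds for this choice because for each $t\le k$ one can write $\phi_k = A_{k:t}w_t$ with $w_t:=\frac{1}{n}A_{t-1:0}\mathbf{1}_n$ (interpreting $A_{-1:0}$ as the identity), a stochastic vector by column stochasticity, so $\phi_k$ is a convex combination of the columns of $A_{k:t}$; since Lemma~\ref{lemma_angelia} (applied to any admissible sequence) forces those columns to be pairwise within $2C\lambda^{k-t}$ of one another, every column of $A_{k:t}$ is within $2C\lambda^{k-t}$ of $\phi_k$, i.e., the decay bound holds for this re-chosen sequence after at most doubling the constant.
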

	
	Now, we proceed to state our two main results 
	showing the convergence properties of the algorithm in Eq.~\eqref{ours}.
	\begin{lemma}\label{lemma_main}
		The expected mean process $\{\mathbb{E}[\theta_k^i]\}$ converges to 
		$\theta^*$ for all $i$ with a convergence rate of $O(1 / k)$. Moreover, the constant terms depend on the topology of the network, the precision of the observations and the initial guess.
	\end{lemma}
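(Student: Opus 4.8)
The plan is to linearize the coupled recursion in Eq.~\eqref{ours} by tracking the unnormalized quantity $y_k^i \triangleq \tau_k^i \theta_k^i$. Substituting into Eq.~\eqref{ours_b} and clearing the denominator, the numerator of $\theta_{k+1}^i$ is exactly $\tau_{k+1}^i\theta_{k+1}^i$, so that $y_k^i$ obeys the linear push-sum recursion
\begin{align*}
y_{k+1}^i &= \sum_{j=1}^n [A_k]_{ij}\, y_k^j + s_{k+1}^i \tau^i,
\end{align*}
which shares the matrix $A_k$ with the precision recursion $\tau_{k+1}^i = \sum_j [A_k]_{ij}\tau_k^j + \tau^i$ but differs in its driving input. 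The crucial simplification is that $\tau_k$ is a deterministic sequence (the matrices $A_k$, the inputs $\tau^i$, and the initialization $\tau_0^i=\tau^i$ carry no randomness), so that $\mathbb{E}[\theta_k^i] = \mathbb{E}[y_k^i]/\tau_k^i$; moreover, since $\mathbb{E}[s_{k+1}^i]=\theta^i$, the expectation $\mathbb{E}[y_k^i]$ satisfies the same recursion with the deterministic input $\theta^i\tau^i$.

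Next I would unroll both deterministic recursions in terms of the matrix products $A_{k-1:t}$, writing
\begin{align*}
\tau_k = A_{k-1:0}\,\tau_0 + \sum_{t=1}^{k} A_{k-1:t}\, b, \qquad
\mathbb{E}[y_k] = A_{k-1:0}\,y_0 + \sum_{t=1}^{k} A_{k-1:t}\,\bar{u},
\end{align*}
with $b^i=\tau^i$, $\bar{u}^i=\theta^i\tau^i$, and the convention $A_{k-1:k}=I$. The $i$-th entry of each sum is a sum of $[A_{k-1:t}]_{ij}$ weighted by the inputs, which is precisely where Lemma~\ref{lemma_angelia} enters: replacing $[A_{k-1:t}]_{ij}$ by $\phi_{k-1}^i$ incurs an error bounded by $C\lambda^{(k-1)-t}$ per term, and summing this geometric tail over $t$ contributes only an $O(1)$ remainder (the initial terms $A_{k-1:0}\tau_0$ and $A_{k-1:0}y_0$ are likewise $O(1)$ by column stochasticity). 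This yields the leading-order expansions
\begin{align*}
\tau_k^i = k\,\phi_{k-1}^i \Big(\textstyle\sum_j \tau^j\Big) + O(1), \qquad
\mathbb{E}[y_k^i] = k\,\phi_{k-1}^i \Big(\textstyle\sum_j \theta^j\tau^j\Big) + O(1).
\end{align*}

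Finally, I would form the ratio $\mathbb{E}[\theta_k^i]=\mathbb{E}[y_k^i]/\tau_k^i$ and divide numerator and denominator by the common factor $k\,\phi_{k-1}^i$. The leading terms produce $\big(\sum_j \theta^j\tau^j\big)/\big(\sum_j \tau^j\big)$, which equals $\theta^*$ after substituting $\tau^j=1/(\sigma^j)^2$ and comparing with Eq.~\eqref{opt_theta}. The remainders become $O\!\big(1/(k\,\phi_{k-1}^i)\big)$, and here Lemma~\ref{lemma_deltabound} is essential: the uniform lower bound $\phi_{k-1}^i\ge \delta/n>0$ turns these into a genuine $O(1/k)$ term rather than merely $o(1)$, and also makes explicit the dependence of the constant on the network topology (through $\delta$, $C$, $\lambda$), the precisions $\tau^j$, and the initial guess (through $y_0$). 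The main obstacle I anticipate is the ratio step: because $\phi_{k-1}^i$ carries the hard-to-control time dependence of the push-sum limit, I must keep it as a common factor in both numerator and denominator so that it cancels exactly, rather than estimating $\tau_k^i$ and $\mathbb{E}[y_k^i]$ separately, where the residual $\phi_{k-1}^i$ dependence would otherwise obstruct a clean $O(1/k)$ bound.
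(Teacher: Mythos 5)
Your proposal is correct and takes essentially the same route as the paper: both linearize via $x_k^i=\tau_k^i\theta_k^i$, unroll the two recursions in terms of the products $A_{k:t}$, use Lemma~\ref{lemma_angelia} to isolate the leading $k\,\phi_k^i$ terms (whose ratio cancels exactly to $\theta^*$) up to a geometrically summable remainder, and invoke the $\delta$ bound of Lemma~\ref{lemma_deltabound} to make the denominator grow linearly in $k$. The only difference is presentational---you expand numerator and denominator to leading order before forming the ratio, whereas the paper subtracts $\theta^*$ first and bounds the three resulting fractions.
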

	
	\begin{proof}
		In fact, we will prove the bound
		\begin{align}\label{main_lemma}
		\left| \mathbb{E}[ \theta_{k+1}^i] - \theta^*\right| \leq \frac{\tau_{max}  }{  \tau_{min} k \delta} \left(\|  \theta_0 -\theta^* \boldsymbol{1} \|_1 + \frac{2 C\|\theta - \theta^*\boldsymbol{1} \|_1 }{1- \lambda} \right)  
		\end{align}
		with $\tau_{max} = \max_j \tau^j$, and $\tau_{min}$ is the smallest non-zero precision among all agents.
		
		First, define a new variable as $x^i_k  = \tau^i_k\theta^i_k$, then from Eq.~\eqref{ours_b} it follows that
		\begin{align*}
		x_{k+1} & = A_kx_k + \text{diag}(\tau) s_{k+1} \\
		& = A_{k:0}x_0 + \sum_{t=1}^{k}A_{k:t}\text{diag}(\tau) s_t + \text{diag}(\tau) s_{k+1} 
		\end{align*}					
		where $\text{diag}(\tau)$ is a diagonal matrix with ${\left[\text{diag}(\tau)\right]_{ii} =\tau^i}$ and ${x_{k} = [x_k^1, \ldots, x_k^n]'}$, 	${\tau = [\tau^1, \ldots, \tau^n]'}$, ${s_{k} = [s_k^1, \ldots, s_k^n]'}$.
		\\
		Adding and subtracting $\sum_{t=1}^{k}\phi_k \tau' s_t$ from the preceding relation we obtain
		{\small
			\begin{align*}	
			x_{k+1} & = A_{k:0}x_0 + \sum_{t=1}^{k}D_{k:t}\text{diag}(\tau)s_t + \text{diag}(\tau)s_{k+1} + \sum_{t=1}^{k} \phi_k \tau' s_t 
			\end{align*}}
		with $D_{k:t} = A_{k:t} - \phi_k \boldsymbol{1}'$, and $\phi_k$ is as in Lemma~\ref{lemma_angelia}. 
		
		Following a similar procedure, from Eq.~\eqref{ours_a} it holds that
		\begin{align*}
		\tau_{k+1} & = A_{k:0}\tau_0 + \sum_{t=1}^{k} D_{k:t}\tau +k\phi_k \boldsymbol{1}'\tau +\tau.
		\end{align*}
		
		Going back to the original variable $\theta_k$, we have that
		{
			\begin{align*}	
			&\mathbb{E}[\theta_{k+1}^i]  = \\ & \frac{[A_{k:0}\text{diag}(\tau)\theta_0]_i + \sum_{t=1}^{k}[D_{k:t}\text{diag}(\tau)\theta]_i + \tau^i\theta^i + k \phi_k^i \tau' \theta}{[A_{k:0}\tau_0]_i + \sum_{t=1}^{k} [D_{k:t}\tau]_i +k\phi_k^i \boldsymbol{1}'\tau +\tau^i}
			\end{align*}}
		By subtracting $\theta^*$ on both sides of the previous relation and taking the absolute value, we obtain
		\begin{align*}
		& \left| \mathbb{E}[ \theta_{k+1}^i] - \theta^*\right| 
		\leq \left|  \frac{   \left[A_{k:0} \text{diag}(\tau_0)\left( \theta_0 -\theta^* \boldsymbol{1}\right) \right]_i }{\sum_{t=1}^{k} [D_{k:t}\tau]_i + k \phi_k^i \boldsymbol{1}'\tau} \right| 
		+
		\\
		& \left|  \frac{  \tau^i\left( \theta^i -\theta^*\right)   }{\sum_{t=1}^{k} [D_{k:t}\tau]_i + k \phi_k^i \boldsymbol{1}'\tau} \right| +
		\left|  \frac{   \sum_{t=1}^{k} \left[  D_{k:t}\text{diag}(\tau)\left( \theta -\theta^* \boldsymbol{1}\right)  \right]_i   }{\sum_{t=1}^{k} [D_{k:t}\tau]_i + k \phi_k^i \boldsymbol{1}'\tau} \right|
		\end{align*}
		where the terms involving $k \phi_k^i \tau' \theta$ cancel out and the following positive terms are removed from the denominator $\left[ A_{k:0}\tau_0\right]_i  +\tau^i > 0$. 
		
		Then by the fact that $[D_{k:t}\boldsymbol{1}]_i + \phi_k^i n > \delta$ on the denominator and using Lemma \ref{lemma_angelia} on the third term it follows that
		\begin{align*}
		&\left| \mathbb{E}[ \theta_{k+1}^i] - \theta^*\right|  			  \leq \left|  \frac{   \left[A_{k:0} \text{diag}(\tau_0)\left( \theta_0 -\theta^* \boldsymbol{1}\right) \right]_i }{ k \delta \tau_{min}} \right| 
		+
		\\
		& \qquad \frac{  \tau^i |\theta^i -\theta^*|  }{ k \delta \tau_{min}  }  +
		\frac{C \tau_{max} \| \theta -\theta^*\boldsymbol{1} \|_1 }{k \delta \tau_{min} (1-\lambda)} .
		\end{align*}
		
		Finally, the desired result follows by H\"olders inequality in the first term with $\|[A_{k:0} \text{diag}(\tau)]_i\|_{\infty} = \tau_{max}$ and grouping the second and third terms since $\frac{C}{1-\lambda} > 1 $.
		\begin{align*}
		&\left| \mathbb{E}[ \theta_{k+1}^i] - \theta^*\right|  			  \leq  \frac{   \max_j [A_{k:0}]_{ij} \tau^j\| \theta_0 -\theta^* \boldsymbol{1}\|_1  }{ k \delta \tau_{min}} 
		+
		\\
		& \qquad +
		\frac{2 C \tau_{max} \| \theta -\theta^*\boldsymbol{1} \|_1 }{k \delta \tau_{min} (1-\lambda)} .
		\end{align*}
	\end{proof}
	
	The first term in Eq. \eqref{main_lemma} shows the dependency on the initial estimates $\theta_0$ while the second term shows depends on the heterogeneity of mean of local observations. The network topology and the number of agents is characterized by $\lambda$ and $\delta$. 
	
	We are now ready to state our main result about the almost sure convergence of the proposed algorithm.
	
	\vspace{-1.0em}
	\begin{theorem}\label{almost}
		Let the graph sequence of interactions $\{\mathcal{G}_k\}_{k=1}^{\infty}$ be B-strongly connected. Moreover, assume ${S_k^i \sim \mathcal{N}(\theta^i,(\sigma^i)^2)}$ and $\ell^i(\cdot|\theta) = \mathcal{N}(\theta,(\sigma^i)^2)$ for all $i$. Then, the sequence $\{\theta_k^i\}$ generated by Eq. \eqref{ours} converges almost surely to $\theta^*$, i.e.
		\begin{align*}
		\lim_{k\to \infty} \theta_k^i & = \theta^* \ \ \ \ \ \text{a.s.} \ \ \forall i 
		\end{align*}
	\end{theorem}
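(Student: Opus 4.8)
The plan is to build on the exact unrolled expressions for $x_{k+1}$ and $\tau_{k+1}$ derived in the proof of Lemma~\ref{lemma_main}, and to reduce the almost sure statement to the strong law of large numbers (SLLN). Writing $\theta_{k+1}^i = x_{k+1}^i/\tau_{k+1}^i$ and substituting $A_{k:t}=\phi_k\boldsymbol{1}' + D_{k:t}$ with $D_{k:t}=A_{k:t}-\phi_k\boldsymbol{1}'$, I would record
\begin{align*}
x_{k+1}^i &= [A_{k:0}x_0]_i + \sum_{t=1}^k [D_{k:t}\text{diag}(\tau)s_t]_i + \phi_k^i\sum_{t=1}^k \tau's_t + \tau^i s_{k+1}^i,\\
\tau_{k+1}^i &= [A_{k:0}\tau_0]_i + \sum_{t=1}^k [D_{k:t}\tau]_i + k\phi_k^i\,\boldsymbol{1}'\tau + \tau^i,
\end{align*}
using $\boldsymbol{1}'\text{diag}(\tau)s_t = \tau's_t$. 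Both numerator and denominator contain a single term of order $k\phi_k^i$, so the natural normalization is to divide through by $k\phi_k^i$, which is legitimate since Lemma~\ref{lemma_deltabound} guarantees $\phi_k^i \ge \delta/n > 0$ for every $k$.

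After this normalization the denominator reads $\boldsymbol{1}'\tau$ plus three vanishing terms: $[A_{k:0}\tau_0]_i$ and $\tau^i$ are bounded constants divided by $k$, and $\sum_{t=1}^k [D_{k:t}\tau]_i$ is deterministically bounded by $C\,\boldsymbol{1}'\tau/(1-\lambda)$ thanks to the geometric bound $|[D_{k:t}]_{ij}|\le C\lambda^{k-t}$ of Lemma~\ref{lemma_angelia}; each divided by $k\phi_k^i$ tends to $0$, so the normalized denominator converges to $\boldsymbol{1}'\tau$. The decisive term in the numerator is $\frac{1}{k}\sum_{t=1}^k \tau's_t$. Since $s_t^j = \theta^j + \epsilon_t^j$ with $\epsilon_t^j$ i.i.d.\ zero-mean Gaussian of finite variance, the SLLN gives $\frac{1}{k}\sum_{t=1}^k \tau's_t \to \tau'\theta = \theta^*\,\boldsymbol{1}'\tau$ almost surely. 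Dividing the (a.s.\ convergent) normalized numerator by the normalized denominator then yields $\theta_{k+1}^i \to \tau'\theta/\boldsymbol{1}'\tau = \theta^*$, matching Eq.~\eqref{opt_theta}.

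It remains to dispose of the remaining normalized numerator terms: the bounded term $[A_{k:0}x_0]_i/(k\phi_k^i)$, the term $\tau^i s_{k+1}^i/(k\phi_k^i)$, and $\frac{1}{k\phi_k^i}\sum_{t=1}^k [D_{k:t}\text{diag}(\tau)s_t]_i$. The first tends to $0$ since the entries of $A_{k:0}$ lie in $[0,1]$ while $k\phi_k^i\to\infty$; the second vanishes a.s.\ because $s_{k+1}^i/k\to 0$ a.s.\ (a consequence of $\mathbb{E}|s_{k+1}^i|<\infty$ via Borel--Cantelli, or of the SLLN). The third is the crux: using $|[D_{k:t}]_{ij}|\le C\lambda^{k-t}$ I would bound it by $\frac{C\tau_{max}}{k\phi_k^i}\sum_{t=1}^k \lambda^{k-t}\|s_t\|_1$ and show that $\frac{1}{k}\sum_{t=1}^k \lambda^{k-t}\|s_t\|_1 \to 0$ a.s.\ by a window split: for fixed $M$, the tail $\sum_{t\le k-M}\lambda^{k-t}\|s_t\|_1 \le \lambda^M\sum_{t=1}^k\|s_t\|_1$, which after dividing by $k$ is at most $\lambda^M$ times a quantity converging a.s.\ to $\mathbb{E}\|s_1\|_1$ by the SLLN, hence arbitrarily small; the remaining $M$ terms are each at most $\max_{t\le k}\|s_t\|_1$, and $\frac{1}{k}\max_{t\le k}\|s_t\|_1 \to 0$ a.s.\ because $\|s_k\|_1 = o(k)$ a.s.

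I expect this last geometric-weighting estimate to be the main obstacle, since it is the only place where the randomness interacts nontrivially with the nonstationary mixing of the network: one must argue that a geometrically weighted, hence recency-dominated, average of the unbounded Gaussian observations is negligible against the linear growth $k\phi_k^i$ of the denominator. Everything else reduces either to the geometric mixing of Lemma~\ref{lemma_angelia}, the uniform lower bound $\phi_k^i\ge\delta/n$ of Lemma~\ref{lemma_deltabound}, or a direct application of the SLLN to the i.i.d.\ observation stream.
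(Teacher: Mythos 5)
Your proof is correct. Note, however, that the paper itself gives no proof of Theorem~\ref{almost}: it only remarks that the result ``follows by the bounded variance assumption of the observations and the weighted law of large numbers in [Pruitt, 1966].'' The intended argument there is presumably to view $\theta_{k+1}^i$ directly as a Toeplitz-type weighted average of the i.i.d.\ observations, with weights $[A_{k:t}]_{ij}\tau^j/\tau_{k+1}^i$ whose row sums tend to one and whose maximum decays like $1/k$, and then to cite the weighted strong law as a black box. You instead split $A_{k:t}=\phi_k\boldsymbol{1}'+D_{k:t}$ so that the dominant part of the numerator becomes an ordinary unweighted average $\frac{1}{k}\sum_t\tau's_t$ handled by the classical SLLN, and you dispose of the geometrically weighted remainder by hand (the window split works; even more simply, $\sum_{t\le k}\lambda^{k-t}\|s_t\|_1\le(1-\lambda)^{-1}\max_{t\le k}\|s_t\|_1=o(k)$ a.s., since $\|s_k\|_1=o(k)$ a.s.\ by Borel--Cantelli). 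This buys a self-contained, elementary proof at the cost of slightly more bookkeeping, and it reuses the exact unrolled expressions and the bounds $\phi_k^i\ge\delta/n$ and $|[D_{k:t}]_{ij}|\le C\lambda^{k-t}$ already established for Lemma~\ref{lemma_main}, so it fits the paper's machinery well; the only cosmetic caveat is that for ``blind'' agents with $\tau^i=0$ (Remark~1) one should note the limiting denominator $\boldsymbol{1}'\tau$ is still positive, which is automatic under the theorem's standing assumption $\sigma^i>0$ for all $i$.
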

	
	A proof of Theorem \ref{almost} is not shown due to space constraints. Nonetheless, its result follows by the bounded variance assumption of the observations and the weighted law of large numbers in \cite{pru66}.
	\vspace{-1.0em}
	\begin{rem}
		The specific selection of weights as $1 / (d_k^j+1)$ is a design choice. Theorem \ref{almost} still holds for any sequence of column stochastic matrices $\{A_k\}$ with every non-zero entry bounded from bellow away from zero, and with positive diagonal entries. 
	\end{rem}
	\vspace{-2.0em}
	\section{Identical distributions for all agents}\label{inde}
	
	A specific version of the proposed problem is the case when all agents observe independent realizations of the same random variable, i.e. $S_k^i \sim \mathcal{N}(\theta^*,(\sigma^2)^*)$. Recently, authors in \cite{chu16,bia16} have explored this case. Specifically, in \cite{bia16} the authors are concerned with the effects of the network topology on the convergence rate of the distributed mean estimation problem.  They show mean square consistency of the following algorithm
	\begin{align}\label{biau}
	\theta^i_{k+1} & = \frac{k}{k+1}\sum_{j=1}^{n}a_{ij} \theta_k^j +\frac{1}{k+1}s_{k+1}^i,
	\end{align}
	and provide explicit rates for different network topologies. Note that the algorithm in Eq. \eqref{biau} reduces to Eq. \eqref{ours} when $\tau^i = 1$ in such a way that $\tau_k^i = k$ for all $i$, and the graph is static with a doubly stochastic weight matrix.
	
	In \cite{chu16}, the authors proposed a new distributed Gaussian learning algorithm where communication between agents is noisy. Following the non-Bayesian learning without recall approach proposed in \cite{rah15} they develop the specific realization for Gaussian random variables. Additionally, they consider the sequence of observations $\{s_k^i\}$ as coming from an agent, denoted as $n+1$, and thus a different weighting strategy is proposed. Their algorithm is
	\begin{subequations}\label{chazelle}
		\begin{align}
		\tau_{k+1}^i & = \tau_k^i + d_k^j \tau \\
		\theta_{k+1}^i & = \frac{\sum_{j=1 }^{n+1}\tau_k^j a_k^j}{\tau_{k+1}^i}
		\end{align}
	\end{subequations}
	with the specific condition that $\tau_k^j = \tau$ for all $j \neq i$, $a_k^j =\theta_k^i$ for $j=i$ and $a_k^j = \theta_k^j +\epsilon$ with $\epsilon \sim \mathcal{N}(0,\tau)$, with $a_{k}^{n+1} = s_k^i$. The authors showed almost sure convergence of the algorithm. Moreover, a convergence rate of $O(k^{-\frac{\gamma}{2d}})$ was derived, where $\gamma$ is a bound on the uniform connectivity to the truth observations and $d$ is the maximal degree over all the networks.
	
	One particular characteristic of the algorithm proposed in \cite{chu16} is that, apart from traditional literature on distributed learning, the authors do not assume agents communicate over a \textit{ sufficiently }connected network ($B$-strong connectivity in Theorem \ref{almost}). They replace this assumption by a so-called \textit{truth-hearing assumption} which works as a $1 /\gamma$-strong connectivity with the $n+1$ node that provides direct noisy observations of $\theta^*$. Thus, it is required that every node receives signals from node $n+1$ at least once in every time interval of length $1 / \gamma$. If all agents receive independent observations from identical distributions, connectivity of the network and truth hearing assumptions both serve the same purpose of guarantying the diffusion of the information over the network, otherwise some form of connectivity between agents is needed.
	
	In addition to different connectivity assumptions, one main characteristic of the algorithm in Eq. \eqref{chazelle} is that agents do not differentiate the signal $S_k^i$ coming from the observations of the parameter, and the signals $\{a_k^j\}$ coming from other agents. Every agent treats both signals similarly. The weights for observations of $S_k^i$ and neighbors signals $\{\theta_k^i\}_{i=1}^n$ decay. Whereas in our approach in Eq. \eqref{ours} the weight for $S_k^i$ decays to zero and the weight for the convex combination of $\{\theta_k^i\}_{i=1}^n$ goes to one. This indeed shows that we do require the identification of signals coming from either agents or the noisy parameter observations. This extra information could explain why our approach has better performance in terms of convergence rates. 
	
	\section{Simulations}\label{sec_simulations}
	
	In this section, we provide simulation results for our proposed algorithm and we compare its performance with results in \cite{chu16,bia16}. Initially, we will consider the same scenario as in \cite{chu16,bia16} with static undirected graphs with all agents having identical distributions in their \textit{noiseless beliefs sharing}. We will evaluate the performance of the algorithms for two different graphs topologies, namely: path/line graph and a lattice/grid graph. 
	
	\vspace{-1em}
	\begin{figure}[ht]
		\centering
		\includegraphics[width=0.5\textwidth]{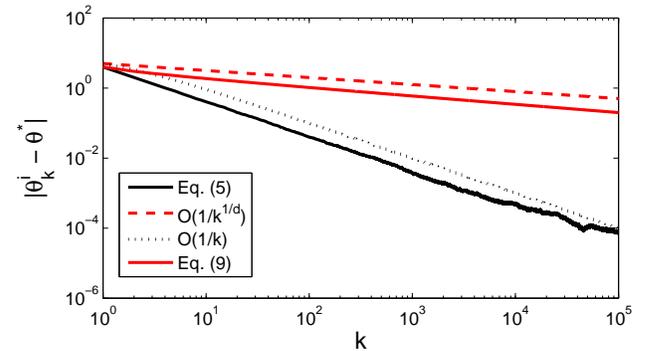}
		\caption{Simulations results of algorithms in Eq. \eqref{ours} and Eq. \eqref{chazelle} for a lattice/grid graph of 25 nodes for an average behavior over 500 Monte Carlo simulations.}
		\label{grid25}
	\end{figure}
	
	\vspace{-0.5em}
	Figure \ref{grid25} shows the absolute error of the estimated value $\theta^*$ for the lattice/grid graph with 25 agents. It is assumed that $S_k^i \sim \mathcal{N}(4,1)$. An average over 500 Monte Carlo simulations is shown for one arbitrary agent. In addition, the theoretical convergence rates are also shown for comparison purposes. No simulation of the algorithm in Eq. \eqref{biau} is shown since it reduces to the same algorithm as in Eq. \eqref{ours} for the simulated scenario.
	
	Figure \ref{path25} shows the simulation results for the same scenario as in Figure \ref{grid25} but now for a path/line graph of 15 agents. As predicted by the theoretical convergence rate bounds, the proposed algorithm in Eq. \eqref{ours} decays as $O(1/k)$ where the topology of the network affects only the constant whereas the proposal in Eq. \eqref{chazelle} depends explicitly  on the maximum degree among all graphs as $O(1/k^{1/d})$.  
	
	\vspace{-1em}
	\begin{figure}[ht]
		\centering
		\includegraphics[width=0.5\textwidth]{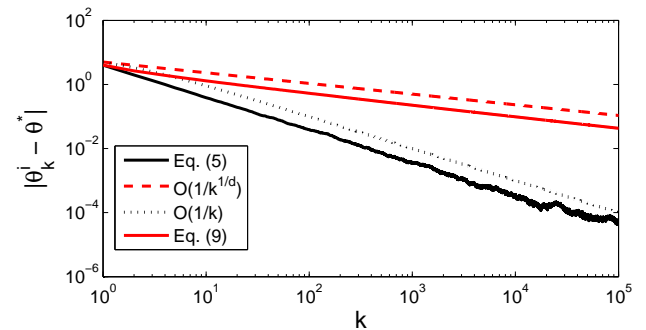}
		\caption{Simulations results of algorithms in Eq. \eqref{ours} and Eq. \eqref{chazelle} for a path graph of 25 nodes. Average behavior over 500 Monte Carlo simulations.}
		\label{path25}
	\end{figure}
	
	\vspace{-0.5em}
	Next, we will show that for the case of each agent having noise with different standard deviations, by using information about the current estimate precision (i.e. $\tau_k^i$) a better performance is achieved. Figure \ref{d_tau} shows the absolute error on the estimation of $\theta^*$ for the algorithm in Eq. \eqref{ours} that uses precision information and the proposal in Eq. \eqref{biau} that assumes uniform precision. In this simulation, agents have heterogeneous precisions such that $S_k^i \sim \mathcal{N}(4,i)$. That is, in the path graph, the first agent has $\tau^1 = 1$, the last agent, on the other hand, has $\tau^n = n$. This implies that agent $1$ has the highest variance in its observations. We have chosen to show the results for agent $1$ only.    
	
	\vspace{-1em}
	\begin{figure}[ht]
		\centering
		\includegraphics[width=0.5\textwidth]{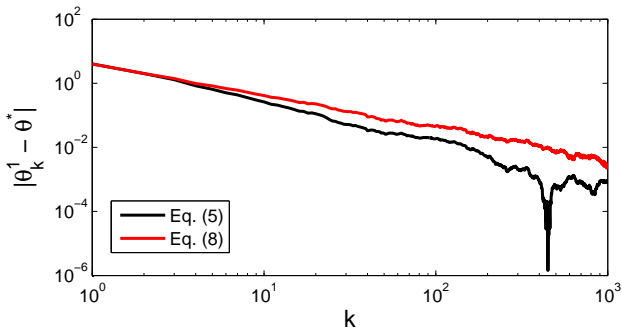}
		\caption{Simulations results of algorithms in Eq. \eqref{ours} and Eq. \eqref{biau} for a path graph of 25 nodes with heterogeneous precisions (i.e. $\tau's$). Average behavior over 500 Monte Carlo simulations.}
		\label{d_tau}
	\end{figure}
	
	\vspace{-0.5em}
	Finally, we will present the simulation results for a directed static graph which has been shown to be a pathological case for the push-sum algorithm, see Figure \ref{bad}. Each agent receives signals of the form $S_k^i \sim \mathcal{N}(i,n-i+1)$. Thus every agent has different measurement precisions and different $\theta^i$. The optimal $\theta^*$ as defined in Eq. \eqref{opt_theta}.
	
	\begin{figure}[ht]
		\centering
		\includegraphics[width=0.4\textwidth]{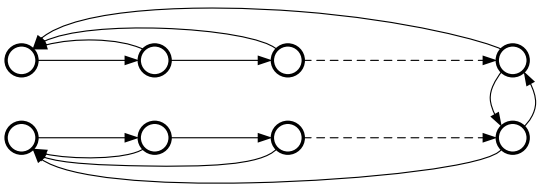}
		\caption{Directed graph for simulation of the algorithm in Eq. \eqref{ours}.}
		\label{bad}
	\end{figure}
	
	\vspace{-0.5em}
	Figure \ref{bad_result} shows the simulation results for the algorithm in Eq. \eqref{ours} to the specific set of observations ${S_k^i \sim \mathcal{N}(i,n-i+1)}$ on the graph in Figure \ref{bad}. The average over $10$ Monte Carlo simulation is shown. The predicted $O(1/k)$ behavior is observed, after a transition time that depends on the number of agents in the network, (i.e. the effects on $n$ and $\lambda$ in Lemma \ref{lemma_main}).   
	
	\begin{figure}[ht]
		\centering
		\includegraphics[width=0.5\textwidth]{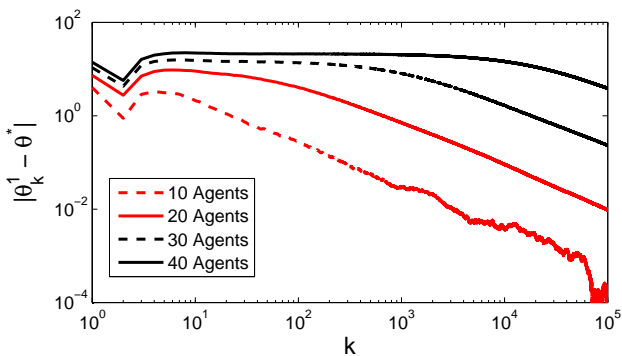}
		\caption{Simulations results of algorithms in Eq. \eqref{ours} for the graph depicted in Figure \ref{bad}. Four different results are shown, for $10$, $20$, $30$ and $40$ agents respectively.}
		\label{bad_result}
	\end{figure}
	\section{Conclusions}\label{sec_conclusions}
	
	We developed an algorithm for distributed parameter estimation with Gaussian noise over time-varying directed graphs. The proposed algorithm is shown to be a specific case of a more general class of distributed (non-Bayesian) learning methods. Almost sure converge as well as an explicit convergence rate is shown in terms of the network topology and the number of agents. Comparisons with recently proposed approaches are presented.
	Future work should consider nonlinear observations of the parameter $\theta$, that is $S_k^i \sim \mathcal{N}(g^i(\theta),(\sigma^i)^2)$ for some function ${g: \Theta \to \mathbb{R}}$. Ongoing work develops similar parameter estimation approaches for the larger case of the exponential family of distributions on the natural parameter space. A particularly interesting case is when the parameter $\theta^*$ is changing with time, either arbitrarily, on some form of Markov process or other dependencies. This case renders observations to be not identically distributed nor independent.
	
	\bibliographystyle{IEEEtran} 
	
	\bibliography{IEEEfull,bayes_cons_3}
	
\end{document}